\newtheorem{theorem}{Theorem}[section]
\newtheorem{definition}[theorem]{Definition}
\numberwithin{equation}{section}
\newtheorem{lemma}[theorem]{Lemma}
\newtheorem{remark}[theorem]{Remark}
\numberwithin{equation}{section}
\def\N{\mathbb{N}}
\def\Z{\mathbb{Z}}
\def\B{\mathcal{B}}
\renewcommand{\phi}{\varphi}
\renewcommand{\epsilon}{\varepsilon}
\newcommand{\1}{{\text{\Large $\mathfrak 1$}}}
\renewcommand{\emptyset}{\varnothing}
\newcommand{\E}[1]{\mathbb{E}\!\left[#1\right]}
\newcommand{\estart}[2]{\mathbb{E}_{#2}\!\left[#1\right]}
\newcommand{\prstart}[2]{\mathbb{P}_{#2}\!\left(#1\right)}
\newcommand{\econd}[2]{\mathbb{E}\!\left[#1\;\middle\vert\;#2\right]}
\newcommand{\id}[1]{\text{indeg}(#1)}
\newcommand{\outd}[1]{\text{outdeg}(#1)}
\newcommand{\ida}[2]{\text{indeg}_{#2}(#1)}
\newcommand{\outda}[2]{\text{outdeg}_{#2}(#1)}
\newcommand\be{\begin{equation}}
\newcommand\ee{\end{equation}}
\begin{document}
\title{\bf A permuted random walk exits faster}

\author{
Richard Pymar\thanks{University College London, London, UK; r.pymar@ucl.ac.uk} \and Perla Sousi\thanks{University of Cambridge, Cambridge, UK;   p.sousi@statslab.cam.ac.uk}
}
\maketitle
\begin{abstract}
Let $\sigma$ be a permutation of $\{0,\ldots,n\}$. We consider the Markov chain $X$ which jumps from $k\neq 0,n$ to $\sigma(k+1)$ or $\sigma(k-1)$, equally likely. When $X$ is at 0 it jumps to either $\sigma(0)$ or $\sigma(1)$ equally likely, and when $X$ is at $n$ it jumps to either $\sigma(n)$ or $\sigma(n-1)$, equally likely. We show that the identity permutation maximizes the expected hitting time of $n$, when the walk starts at 0. More generally, we prove that the hitting time of a random walk on a strongly connected $d$-directed graph is maximized when the graph is the line $[0,n]\cap\Z$ with $d-2$ self-loops at every vertex and $d-1$ self-loops at $0$ and $n$.
\newline
\newline
\emph{Keywords and phrases.} Markov chain, directed graph, hitting time.
\newline
MSC 2010 \emph{subject classifications.}
Primary   60J10.   
\end{abstract}

\section{Introduction}\label{sec:intro}

Let $\sigma$ be a permutation of $\{0,\ldots,n\}$ and $(\xi_i)_i$ be i.i.d.\ uniform random variables in $\{-1,1\}$. We define the process $X^\sigma$ by setting $X^\sigma_0=0$ and $X^\sigma_{t+1} = \sigma(X^\sigma_t+\xi_{t+1})$ if $X^\sigma_t\neq 0,n$. Otherwise, if $X^\sigma_t=0$, then $X^\sigma_{t+1}$ is uniformly random in the set $\{\sigma(0), \sigma(1)\}$ and if $X^\sigma_t=n$, then it is uniformly random in the set $\{\sigma(n),\sigma(n-1)\}$ .

In this paper we address the question of maximizing the hitting time of~$n$ starting from~$0$ by the process $X^\sigma$. In particular we show that the identity permutation gives the slowest hitting time of~$n$ starting from~$0$, i.e.\ in this case $X^\sigma$ is a simple random walk on $\{0,\ldots,n\}$ with a self-loop at~$0$ and at~$n$.

\begin{theorem}\label{thm:perm-0n}
Let $\sigma$ be a permutation of $\{0,\ldots,n\}$ and $X^\sigma$ the Markov chain defined above. If $\tau_n = \inf\{t\geq 0: X^\sigma_t=n\}$, then
\[
\estart{\tau_n}{0} \leq n^2+n.
\]
Equality is achieved if and only if~$\sigma$ is the identity permutation.
\end{theorem}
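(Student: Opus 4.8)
The natural framework is to view $X^\sigma$ as a random walk on a directed graph $G_\sigma$ on vertex set $\{0,\dots,n\}$, where from each $k\neq 0,n$ there are (directed) edges to $\sigma(k-1)$ and $\sigma(k+1)$, and from $0$ and $n$ a double edge (self-loop-like) to $\{\sigma(0),\sigma(1)\}$ and $\{\sigma(n),\sigma(n-1)\}$ respectively. This is a $2$-out-regular digraph, and the more general theorem announced in the abstract (random walk on a strongly connected $d$-directed graph) suggests the right level of generality. So the plan is: first reduce Theorem~\ref{thm:perm-0n} to the statement that, among all strongly connected $2$-out-regular digraphs on $\{0,\dots,n\}$ with a distinguished "source" $0$ and "target" $n$, the hitting time $\estart{\tau_n}{0}$ is maximized by the line graph (simple random walk on $[0,n]\cap\Z$ with holding at the endpoints). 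The key realization is that every permutation $\sigma$ produces such a digraph, but not every such digraph arises from a permutation — hence proving the stronger combinatorial statement both gives the bound and, via a rigidity/equality analysis, pins down that equality forces $\sigma=\mathrm{id}$.

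The core tool I would use is the standard identity for hitting times in terms of effective resistance / Green's functions, but since the graph is directed I expect one cannot use reversibility. Instead I would set up the linear system directly: let $h(k)=\estart{\tau_n}{k}$, so $h(n)=0$ and for $k\neq n$,
\[
h(k) = 1 + \tfrac12 h(a_k) + \tfrac12 h(b_k),
\]
where $\{a_k,b_k\}$ are the two out-neighbours of $k$. The quantity to bound is $h(0)$. The main idea I would pursue is a clever \emph{averaging/telescoping} or an \emph{exchange argument}: order the vertices by their $h$-value, $h(v_0)\ge h(v_1)\ge\cdots\ge h(v_n)=0$ with $v_0=0$ forced to be the maximizer (one should check $0$ indeed maximizes $h$, which is plausible since it is the start and the "farthest" point). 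Then I would try to show $h(v_i)-h(v_{i+1})$ telescopes in a way that is controlled, and that rearranging the out-edges to the "line" configuration (where $v_i$'s neighbours are exactly $v_{i-1}$ and $v_{i+1}$ in this sorted order) can only increase $h(0)$. Summing the defining equations over all $k\neq n$ gives $\sum_k h(k) = n + \tfrac12\sum_k(h(a_k)+h(b_k))$; since the graph is $2$-out-regular the right-hand sum is $\sum_v \mathrm{indeg}(v) h(v)$, so $\sum_{k\ne n} h(k) = n + \sum_v \mathrm{indeg}(v)\,h(v)$, i.e. $\sum_v(1-\mathrm{indeg}(v))h(v) = n - (\text{something at }n)$ — massaging this kind of global identity, together with the fact that the line graph is the unique digraph in which the ordering of vertices by $h$-value is ``monotone along edges'', should give both the inequality and the equality case.

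The hard part, I expect, will be the exchange/rearrangement step: showing precisely that \emph{any} local modification of the out-neighbour structure away from the line configuration strictly decreases $h(0)$. One has to be careful that such a modification keeps the graph strongly connected (otherwise $\tau_n$ could become infinite and the comparison is vacuous/ill-posed), and the effect of a single edge swap on the global solution $h$ of the linear system is not monotone in an obvious way. I would try to control it by a coupling or a path-counting argument: $h(0)=\estart{\tau_n}{0}$ equals the expected number of steps, which can be written as a sum over $t$ of $\prstart{\tau_n>t}{0}$, and each such probability is a sum of weights $2^{-t}$ over length-$t$ walks from $0$ avoiding $n$; one then argues that the line configuration maximizes, for every $t$, the number (weighted) of such self-avoiding-of-$n$ walks. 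If a direct monotone-coupling argument is too delicate, the fallback is an extremal/variational argument: take a maximizer $\sigma^\star$ over the finite set of permutations (or digraphs), assume it is not the line, and produce an explicit improving swap, contradicting maximality. Either way, the endpoint self-loops (giving indegree/outdegree bookkeeping $d-1$ at $0$ and $n$ versus $d-2$ elsewhere in the general statement) will need separate, careful handling, since $0$ and $n$ play asymmetric special roles.
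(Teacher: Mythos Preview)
Your high-level reduction is right and matches the paper: $X^\sigma$ is a random walk on a directed graph on $\{0,\dots,n\}$, and the bound should follow from a general extremal statement about such digraphs (the paper's Theorem~\ref{thm:general} with $d=2$). But two points deserve correction before anything else. First, the digraph is not merely $2$-out-regular: it is \emph{Eulerian}, i.e.\ every vertex has in-degree equal to out-degree equal to $2$ (each $j$ is the image $\sigma(j')$ of a unique $j'$, and $j'$ has exactly two in-arrows, from $j'-1$ and $j'+1$, with the endpoint conventions). This balance is what makes the uniform distribution stationary and is essential to the paper's argument; without it your global summation identity does not simplify usefully. Second, there is no reason for $0$ to maximise $h(\cdot)=\estart{\tau_n}{\cdot}$ for an arbitrary permutation $\sigma$, so ordering vertices by $h$-value with $v_0=0$ is not legitimate.

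More substantively, the heart of your plan --- an exchange/rearrangement argument, or the pointwise claim that the line maximises for every $t$ the total weight of length-$t$ walks from $0$ avoiding $n$ --- is where the real content lies, and you have not supplied a mechanism. The pointwise-in-$t$ claim is very strong (it is a domination statement, not just an expectation inequality) and is almost certainly false for small $t$ in some configurations; the exchange argument runs into exactly the difficulty you flag, that a single edge swap changes the entire solution of the linear system in a way that is not obviously monotone. So as written this is a plan with a genuine gap at the decisive step.

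The paper's route is quite different and avoids these issues. It proves the general $d$-regular statement by induction on the number of vertices: given the target $y$, one first bounds $\estart{\tau_y}{\mu}$ for any distribution $\mu$ supported on the in-neighbours $I$ of $y$ by $d(n+1)$, via an excursion decomposition (each visit to $I$ has a $\geq 1/d$ chance to jump straight to $y$, and the expected excursion length back to $I$ is controlled by a return-time bound on a modified Eulerian graph; this is where in-degree $=$ out-degree is used). Then one deletes $y$, rewires the freed in/out stubs between $I$ and the out-neighbours $J$ of $y$ so as to keep the smaller graph connected and Eulerian, and applies the induction hypothesis to bound $\estart{\tau_I}{x}$. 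The equality analysis falls out of the induction, and specialising to $d=2$ plus the observation that the only permutation whose associated digraph is $L(2,n)$ with $0\mapsto 0$ and $n\mapsto n$ is the identity gives Theorem~\ref{thm:perm-0n}. If you want to salvage your approach, the ingredient you are missing is precisely a way to ``peel off'' one vertex and reduce to a smaller instance; the excursion bound from the in-neighbourhood of the target is the key lemma that makes this work.
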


As we explain in Section~\ref{sec:permutation}, the process $X^\sigma$ can be viewed as a random walk on a strongly connected graph where every vertex has outdegree and indegree equal to $2$. In Section~\ref{sec:directed} we prove a more general result (Theorem~\ref{thm:general}) concerning directed graphs in which every vertex has indegree equal to the outdegree equal to $d$. Then in Section~\ref{sec:permutation} we give the proof of Theorem~\ref{thm:perm-0n} by applying Theorem~\ref{thm:general} for $d=2$.

\begin{definition}\rm{
Let $d\geq 2$ and $n\in \N$. We define $L(d,n)$ to be the graph on~$[0,n]\cap \Z$ with the following properties:
\newline
1) $0$ and $n$ have $d-1$ self loops each and all other vertices have $d-2$ self loops
\newline
2) for every $k\neq 0$ and $\ell\neq n$ there is a directed edge from $k$ to $k-1$ and from $\ell$ to $\ell+1$.
}
\end{definition}

\begin{figure}[h!]
\begin{center}
\includegraphics[scale=1]{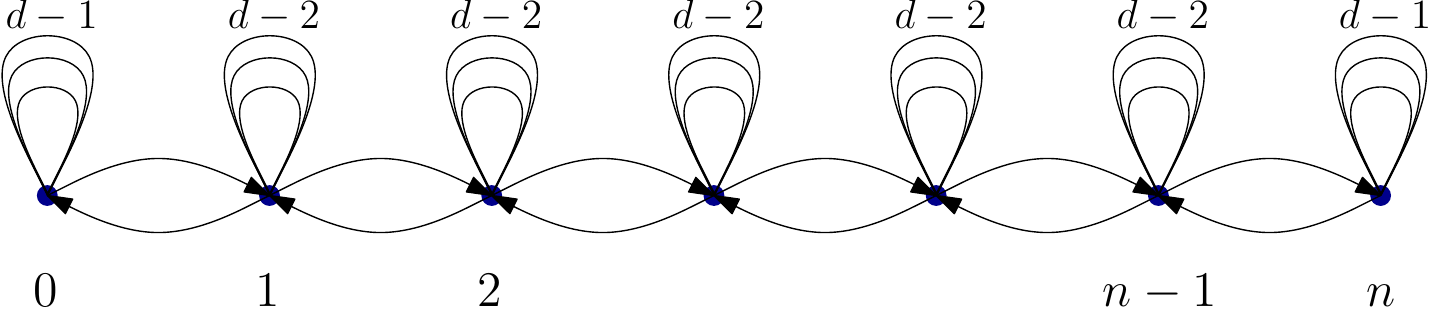}
\caption{The graph $L(d,n)$}\label{fig:ldn}
\end{center}
\end{figure}

\begin{definition}\rm{
A directed graph $G$ is {\em{connected}} if the graph~$G'$ obtained by removing the directions from the edges is connected. A directed graph $G$ is {\em{strongly connected}} if for every pair of vertices~$x,y$ there is a directed path from $x$ to $y$. We denote this by $x\rightsquigarrow y$.
We denote by $V(G)$ the vertex set of a graph~$G$ and
we write $\outda{x}{G}$ and $\ida{x}{G}$ for the outdegree and indegree of the vertex~$x$ in the graph $G$.
}
\end{definition}

\begin{theorem}\label{thm:general}
Let $G$ be a directed graph on $n+1$ vertices which is strongly connected and such that the indegree of every vertex is equal to its outdegree and equal to $d\ge2$ (allowing (multiple) self-loops and multiple edges). Then if $\tau_x$ is the first hitting time of $x$ by a simple random walk on $G$, then for all $x$ and $y$ we have
\[
\estart{\tau_y}{x} \leq \frac{d}{2}n(n+1).
\]
Equality is achieved if and only if $G$ is isomorphic to $L(d,n)$.
\end{theorem}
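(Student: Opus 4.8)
The plan is to fix the target vertex $y$, work with the harmonic function $h(v):=\estart{\tau_y}{v}$, and extract the bound by summing the one‑step equations for $h$ over the super‑level sets of $h$; the equality case will then come out of the extremal analysis of those same inequalities.

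First I would fix $y$ and record the standard facts that $h(y)=0$, that $h(v)\ge 1$ for $v\neq y$, and that
$h(v)=1+\tfrac1d\sum_{v\to w}h(w)$ for every $v\neq y$ (the sum over out‑edges with multiplicity). Order the vertices as $v_0,v_1,\dots,v_n$ so that $h(v_0)\le h(v_1)\le\cdots\le h(v_n)$; then necessarily $v_0=y$ and $h(v_0)=0$. Put $h_k:=h(v_k)$, and let $A_{k-1}:=\{v_0,\dots,v_{k-1}\}$ and $B_k:=\{v_k,\dots,v_n\}$, so that for $1\le k\le n$ the pair $(A_{k-1},B_k)$ partitions $V(G)$ into two non‑empty sets. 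For a set $S$ and a vertex $w$ let $e(S\to w)$ be the number of edges of $G$ from $S$ to $w$. Since every indegree equals $d$ we have $e(A_{k-1}\to w)+e(B_k\to w)=d$ for all $w$; since indegree equals outdegree at every vertex the cut is balanced, $e(A_{k-1}\to B_k)=e(B_k\to A_{k-1})=:e_k$; and strong connectivity of $G$ forces $e_k\ge 1$.

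The key step is an identity obtained by summing $h(v)=1+\tfrac1d\sum_{v\to w}h(w)$ over all $v\in B_k$ (each such $v$ is $\ne y$). Writing $\sum_{v\in B_k}\sum_{v\to w}(\cdot)=\sum_{w}e(B_k\to w)(\cdot)$, then substituting $e(B_k\to w)=d-e(A_{k-1}\to w)$, and cancelling the common term $\sum_{w\in B_k}h(w)$ from the two sides, one arrives at
\[
\sum_{w\in B_k}h(w)\,e(A_{k-1}\to w)\;=\;d\,|B_k|\;+\;\sum_{w\in A_{k-1}}h(w)\,e(B_k\to w).
\]
Now bound the left‑hand side below by $h_k e_k$ (every $w\in B_k$ has $h(w)\ge h_k$, and the edges counted total $e_k$) and the last sum above by $h_{k-1}e_k$ (every $w\in A_{k-1}$ has $h(w)\le h_{k-1}$); this yields $e_k(h_k-h_{k-1})\le d|B_k|$, hence, using $e_k\ge 1$,
\[
h_k-h_{k-1}\;\le\;\frac{d\,|B_k|}{e_k}\;\le\;d\,|B_k|\;=\;d(n-k+1).
\]
Summing over $k=1,\dots,n$ and using $h_0=0$ gives $\displaystyle\max_x\estart{\tau_y}{x}=h_n=\sum_{k=1}^{n}(h_k-h_{k-1})\le d\sum_{k=1}^{n}(n-k+1)=\tfrac d2 n(n+1)$, and $y$ was arbitrary.

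For the equality case, suppose $\estart{\tau_y}{x}=\tfrac d2 n(n+1)$ for some $x,y$; then every inequality above must be an equality. Equality throughout forces $e_k=1$ for every $k$ and $h_k-h_{k-1}=d(n-k+1)>0$, so the $h_k$ are strictly increasing; and equality in the two estimates forces, for each $k$, that the unique edge from $A_{k-1}$ into $B_k$ ends at $v_k$ while the unique edge from $B_k$ into $A_{k-1}$ ends at $v_{k-1}$. Comparing the cuts at consecutive levels then shows that the only non‑loop edges incident to $v_k$ join it to $v_{k-1}$ and/or $v_{k+1}$, each with multiplicity one; counting degrees, $v_0$ and $v_n$ carry $d-1$ loops and every interior $v_k$ carries $d-2$ loops, i.e.\ $G$ is isomorphic to $L(d,n)$. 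Conversely, solving the first‑step equations on $L(d,n)$ (at an interior vertex the walk moves with probability $2/d$, at $0$ and $n$ with probability $1/d$) gives $\estart{\tau_n}{0}=\tfrac d2 n(n+1)$, so equality is attained exactly for $L(d,n)$.

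The only real content is the displayed identity — once it is in hand, the estimate is one line of telescoping. The step that needs the most care is the equality analysis: one must chase the single crossing edge across all $n$ level‑cuts to reconstruct the graph.
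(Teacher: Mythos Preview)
Your proof is correct and proceeds along a genuinely different route from the paper. The paper argues by induction on the number of vertices: it removes the target $y$, rewires the edges between the in-neighbours $I$ and out-neighbours $J$ of $y$ so as to obtain a strongly connected $d$-regular Eulerian graph on $n+1$ vertices, and then applies the induction hypothesis to bound $\estart{\tau_I}{x}$; the remaining term $\estart{\tau_y}{\mu}$ is controlled by a separate excursion lemma (Lemma~\ref{lem:excursions}) showing that from any measure on $I_u$ one reaches $u$ in expected time at most $d(n+1)$. The equality case is then handled by a somewhat delicate case analysis depending on $|I|$ and on whether the reduced graph is isomorphic to $L(d,n)$. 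By contrast, your argument is direct and non-inductive: ordering the vertices by the values of $h(v)=\estart{\tau_y}{v}$, summing the one-step equation over the super-level set $B_k$, and using the balanced-cut property $e(A_{k-1}\to B_k)=e(B_k\to A_{k-1})\ge 1$ to get $h_k-h_{k-1}\le d(n-k+1)$, which telescopes to the bound. Your approach is shorter, avoids the auxiliary lemmas and the graph surgery, and yields the equality characterisation almost for free (each cut has exactly one crossing edge, which pins down the line structure). What the paper's approach buys is a standalone excursion bound of possibly independent interest and a reduction technique that may adapt to related problems; what yours buys is a cleaner, self-contained proof in which the extremal graph emerges transparently from the saturated inequalities.
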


To date, much of the work on Markov chains has focused on random walks on undirected graphs.  Random walks on directed graphs have received relatively less attention and there are many interesting unexplored questions in this area. In particular, the first known bounds for mixing time parameters of a simple random walk on a directed graph have been studied by Fill in~\cite{JimFill} and for the Eulerian case by Montenegro in~\cite{Montenegro}.

Although the methods and ideas of the proofs are completely different, at a philosophical level the problem of maximizing the hitting time of $\{0,n\}$
by $X^\sigma$ over all permutations $\sigma$ is related to applications of rearrangement inequalities as in~\cite{BurSch} and~\cite{Wiener}. We state a related result that was proved by Aizenman and Simon in~\cite{AizenmanSimon}: among all open sets of equal area, the ball maximizes the exit time by a Brownian motion. An analogous statement for a discrete lazy random walk is proved in~\cite{SW12}.

\section{Directed graphs}\label{sec:directed}

In this section we give the proof of Theorem~\ref{thm:general}.
We start by stating a standard result about Eulerian graphs whose proof can be found in the discussion following Theorem~12 in~\cite{Bollobas}
. We then state and prove some preliminary results about directed graphs that will be used in the proof.

\begin{lemma}\label{lem:eulerian}
Let $G$ be a directed graph with the property that the outdegree of every vertex equals its indegree. If~$G$ is connected, then it is strongly connected.
\end{lemma}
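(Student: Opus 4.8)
The plan is to extract all the force of the balance hypothesis (indegree equals outdegree at every vertex) from a single cut-counting observation, and then apply it to the set of vertices reachable from an arbitrary fixed vertex. First I would fix an arbitrary $x \in V(G)$ and define its forward reachable set
\[
R = \{\, y \in V(G) : x \rightsquigarrow y \,\},
\]
which contains $x$ itself via the empty path, so $R \neq \emptyset$. By the definition of reachability, no directed edge can leave $R$: if some edge ran from $u \in R$ to $w \notin R$, then appending that edge to a path $x \rightsquigarrow u$ would give $x \rightsquigarrow w$, forcing $w \in R$, a contradiction. Hence the number of directed edges from $R$ to $V(G)\setminus R$ is zero.

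The crux of the argument, and the only place the balance condition is used, is the following cut-counting identity: for any $S \subseteq V(G)$, the number of directed edges from $S$ to its complement equals the number from the complement to $S$. This follows by summing degrees over $S$. Writing $e(S,S)$ for the number of edges internal to $S$ (including self-loops), we have $\sum_{v\in S}\outda{v}{G} = e(S,S) + \#\{\text{edges } S\to V(G)\setminus S\}$ and $\sum_{v\in S}\ida{v}{G} = e(S,S) + \#\{\text{edges } V(G)\setminus S\to S\}$. Since $\outda{v}{G}=\ida{v}{G}$ for every $v$, the two left-hand sides agree, and the internal-edge term cancels, so the two crossing counts coincide.

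Applying this identity with $S = R$ together with the observation that no edge leaves $R$, I would conclude that no edge enters $R$ either, so there is no directed edge between $R$ and $V(G)\setminus R$ in either direction. Consequently the underlying undirected graph $G'$ has no edge joining $R$ to its complement. Since $G$ is connected by hypothesis, $G'$ is connected, so the only nonempty vertex set disconnected from its complement is $V(G)$ itself; thus $R = V(G)$. As $x$ was arbitrary, every vertex reaches every other vertex, which is precisely strong connectivity. The main obstacle is really only recognizing the cut-counting identity as the right device: once it is in hand, the balance hypothesis upgrades the trivial fact that ``nothing leaves a reachable set'' into the decisive fact that ``nothing enters it either,'' and connectivity then finishes the proof with no further computation.
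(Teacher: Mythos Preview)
Your argument is correct. The cut-counting identity $e(S,\bar S)=e(\bar S,S)$ for balanced digraphs, applied to the forward reachable set $R$ of an arbitrary vertex, cleanly forces $R=V(G)$ via undirected connectivity.

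The paper does not actually supply its own proof of this lemma: it is stated as a standard fact and deferred to the discussion following Theorem~12 in Bollob\'as' text, where the result is framed in terms of Eulerian graphs. The argument there goes through the existence of an Eulerian circuit in a connected balanced digraph, from which strong connectivity is immediate since the circuit visits every vertex. Your route is genuinely different and in some sense more elementary: you never invoke Euler circuits or any trail-building procedure, only the single arithmetic identity on cuts. What the Eulerian-circuit approach buys is a stronger conclusion (an explicit closed walk traversing every edge), while your approach buys a shorter, fully self-contained proof of exactly the statement needed here.
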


\begin{lemma}\label{lem:norrisud}
Let $G$ be a finite strongly connected graph. Suppose there exists a vertex $i$ such that $\ida{i}{G}\geq \outda{i}{G}$ and for all $x\neq i$ we have $\ida{x}{G} \leq \outda{x}{G}$. If $\tau_i^+$ is the first return time to $i$ by a simple random walk on $G$, then
\[
\estart{\tau_i^+}{i} \leq \frac{\sum_x \outda{x}{G} + \outda{i}{G} - \ida{i}{G}}{\outda{i}{G}}.
\]
\end{lemma}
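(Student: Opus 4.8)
The statement is a one-sided bound on the expected return time to a distinguished vertex $i$, under the hypothesis that $i$ is the unique "source-like" vertex (indegree at least outdegree) while every other vertex is "sink-like" (indegree at most outdegree). Since the total indegree equals the total outdegree (every edge contributes one of each), the excess $\ida{i}{G}-\outda{i}{G}\ge 0$ at $i$ is exactly balanced by the total deficit over the other vertices. The natural tool is the stationary-measure / return-time identity: for an irreducible Markov chain with stationary distribution $\pi$, we have $\estart{\tau_i^+}{i} = 1/\pi(i)$, so the claim is equivalent to the lower bound $\pi(i)\ge \outda{i}{G}\,/\,(\sum_x\outda{x}{G}+\outda{i}{G}-\ida{i}{G})$.

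First I would recall that for a simple random walk on $G$ the transition probabilities are $p(x,y) = (\text{number of edges }x\to y)/\outda{x}{G}$, so the measure $\mu(x):=\outda{x}{G}$ satisfies $\sum_x \mu(x)p(x,y) = \ida{y}{G}$ for every $y$. Hence $\mu$ is stationary precisely when the graph is Eulerian; in general $\mu$ is super/sub-stationary in a way controlled by the sign of $\ida{y}{G}-\outda{y}{G}$. The key step is to build from $\mu$ a genuinely stationary measure $\nu$ that puts at least as much mass on $i$ as $\mu$ does relative to the total. One clean way: add a "correction flow" that routes the surplus mass created at $i$ back out along paths, but a more elementary route is to invoke the standard representation of the stationary measure via expected visit counts before returning to $i$. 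Concretely, set $\nu(x) = \estart{\sum_{t=0}^{\tau_i^+-1}\1\{X_t=x\}}{i}$; this $\nu$ is stationary (un-normalised) with $\nu(i)=1$, and $\estart{\tau_i^+}{i} = \sum_x \nu(x)$. So it suffices to show $\sum_x \nu(x) \le (\sum_x\outda{x}{G}+\outda{i}{G}-\ida{i}{G})/\outda{i}{G}$, equivalently $\nu(x)\le \outda{x}{G}/\outda{i}{G}$ for all $x$ would be more than enough but is too strong; instead I would compare $\nu$ with $\mu/\outda{i}{G}$ directly.

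The heart of the argument is the comparison $\nu(x)\le \mu(x)/\mu(i) = \outda{x}{G}/\outda{i}{G}$ for all $x\ne i$, together with a sharpened bookkeeping at $i$ that yields the stated numerator. To get the comparison I would use that $\nu$ is the \emph{minimal} nonnegative solution of the balance equations $\nu(x) = \sum_y \nu(y)p(y,x)$ for $x\ne i$ with $\nu(i)=1$ (the standard characterisation of the occupation measure), while the candidate $\tilde\nu(x):=\mu(x)/\mu(i)$ for $x\ne i$, $\tilde\nu(i)=1$, satisfies $\sum_y\tilde\nu(y)p(y,x) = \ida{x}{G}/\mu(i) \ge \mu(x)/\mu(i) = \tilde\nu(x)$ for $x\ne i$ because $\ida{x}{G}\ge \outda{x}{G}$ fails — wait, we have $\ida{x}{G}\le\outda{x}{G}$, so in fact $\tilde\nu$ is a \emph{subsolution}; hence minimality of $\nu$ does not immediately give $\nu\le\tilde\nu$. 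The fix, which I expect to be the main obstacle, is to run the comparison in the correct direction: one shows $\nu$ is dominated by any nonnegative \emph{supersolution} that is $\ge 1$ at $i$, or alternatively one works with the time-reversed chain, where the inequalities flip. I would therefore pass to the reversed walk on the reversed graph $\overleftarrow G$ (same vertex set, edges reversed): there $i$ becomes the unique sink-like vertex and $\mu$ becomes a supersolution off $i$, so minimality of the reversed occupation measure gives $\overleftarrow\nu(x)\le \outda{x}{G}/\outda{i}{G}$; since expected return times to $i$ are equal for a chain and its reversal, $\estart{\tau_i^+}{i}=\sum_x\overleftarrow\nu(x)\le \sum_x\outda{x}{G}/\outda{i}{G}$. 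Finally I would improve the $x=i$ term from $1$ to $(\outda{i}{G}+\outda{i}{G}-\ida{i}{G})/\outda{i}{G} = 2 - \ida{i}{G}/\outda{i}{G}$... more carefully, the correct accounting is that $\sum_x\outda{x}{G}$ already counts $\outda{i}{G}$ at $i$ whereas $\overleftarrow\nu(i)=1$ contributes only $\outda{i}{G}/\outda{i}{G}=1$, and the surplus $(\ida{i}{G}-\outda{i}{G})/\outda{i}{G}$ that the bound subtracts comes from the strict sub-solution inequality at the neighbours of $i$ in $\overleftarrow G$ — tightening that single step gives exactly the numerator $\sum_x\outda{x}{G}+\outda{i}{G}-\ida{i}{G}$. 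Carrying out this last bookkeeping precisely, and making sure the reversal argument handles the possible strict inequalities correctly, is the delicate part; the rest is standard Markov-chain potential theory.
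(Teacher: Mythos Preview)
Your occupation-measure strategy is sound and genuinely different from the paper's proof, but the proposal contains a real gap at exactly the point where you second-guess yourself.

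\textbf{The gap.} You compute correctly that $\tilde\nu(x)=\outda{x}{G}/\outda{i}{G}$ satisfies $(\tilde\nu P)(x)=\ida{x}{G}/\outda{i}{G}\le\tilde\nu(x)$ for $x\ne i$. You then label this a ``subsolution'', decide the comparison goes the wrong way, and propose passing to the edge-reversed graph $\overleftarrow G$. That detour fails: the simple random walk on $\overleftarrow G$ is \emph{not} the time-reversal of the simple random walk on $G$ unless $G$ is already Eulerian (the time-reversal has kernel $\hat p(x,y)=\pi(y)p(y,x)/\pi(x)$, and $\pi$ is not proportional to degrees here), so there is no reason for the return times to $i$ to agree. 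Fortunately the detour is also unnecessary. Writing $Q$ for $P$ restricted to $V\setminus\{i\}$ and $\rho(x)=p(i,x)$, the occupation measure satisfies $\nu=\rho+\nu Q$, while your computation shows $\tilde\nu\ge\rho+\tilde\nu Q$ on $V\setminus\{i\}$ (this \emph{is} the supersolution inequality you need). Since $(I-Q)^{-1}=\sum_{t\ge0}Q^t$ is entrywise nonnegative, $(\tilde\nu-\nu)=(\tilde\nu-\nu)Q+h$ with $h\ge0$ gives $\tilde\nu-\nu=h(I-Q)^{-1}\ge0$. So $\nu(x)\le\outda{x}{G}/\outda{i}{G}$ for $x\ne i$ holds directly. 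The sharpening to the stated numerator then goes through: with $h(x)=(\outda{x}{G}-\ida{x}{G})/\outda{i}{G}$ one gets
\[
\sum_{x\ne i}(\tilde\nu(x)-\nu(x))=\sum_{y\ne i}h(y)\,\estart{\tau_i}{y}\ \ge\ \sum_{y\ne i}h(y)=\frac{\ida{i}{G}-\outda{i}{G}}{\outda{i}{G}},
\]
using $\estart{\tau_i}{y}\ge1$; summing yields the lemma.

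\textbf{Comparison with the paper.} The paper takes a more combinatorial route: it adds ``fictitious'' edges from $i$ to each vertex in $A=\{x:\ida{x}{G}<\outda{x}{G}\}$ until the resulting graph $G'$ is Eulerian. In $G'$ the stationary distribution is explicit, $\pi(i)=\ida{i}{G}/(\sum_{x\ne i}\outda{x}{G}+\ida{i}{G})$, so $\estart{\tau_i^+}{i}$ in $G'$ is known. A first-step decomposition at $i$ in $G'$ relates this to the original return time plus terms $1+\estart{\tau_i}{j}$ for $j\in A$, and the bound follows from $\estart{\tau_i}{j}\ge1$. Both arguments ultimately hinge on the same crude inequality $\estart{\tau_i}{j}\ge1$; the paper's version packages it via an auxiliary Eulerian graph, yours via a supersolution comparison for the killed chain.
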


\begin{proof}[{\bf Proof}]

Since in every directed graph the sum of the outdegrees must equal the sum of the indegrees, we get
\begin{align}\label{eq:indoutd}
\ida{i}{G} -  \outda{i}{G} =\sum_{x\neq i} (\outda{x}{G} - \ida{x}{G}).
\end{align}
Consider the set $A=\{x: \ida{x}{G} <\outda{x}{G}\}$. We start adding fictitious edges from $i$ to all vertices $j \in A$ until the total number of edges that come into $j$ equals $\outda{j}{G}$. We call the new graph~$G'$ as shown in Figure~\ref{fig:fictitious}.
\begin{figure}[h!]
\begin{center}
\includegraphics[scale=1]{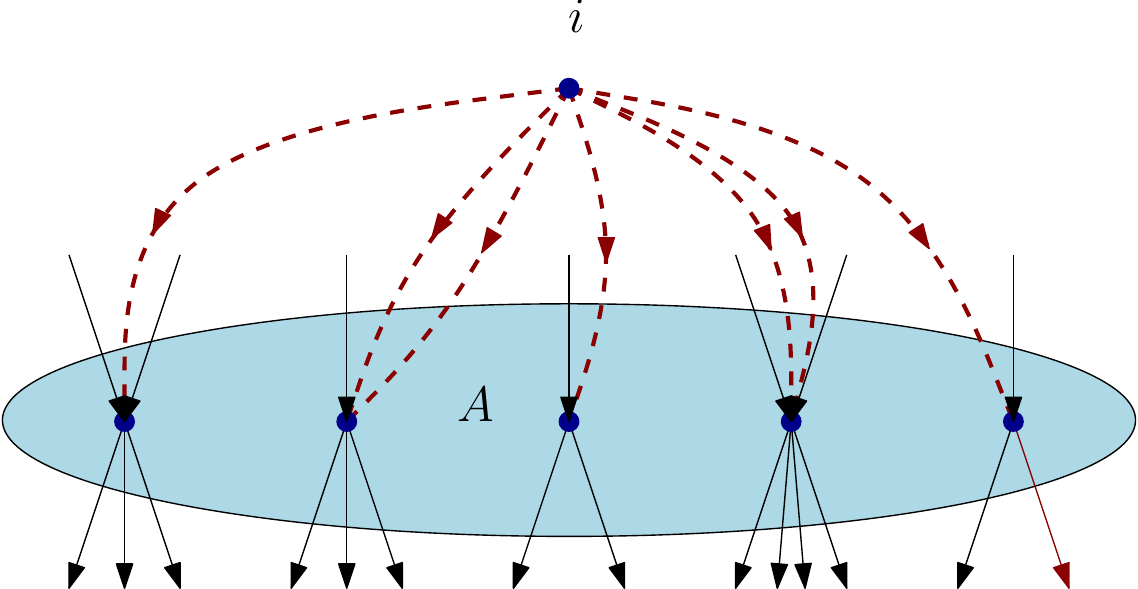}
\caption{The graph $G'$, where the red dashed lines are the new fictitious edges and the black lines are edges in the original graph.}\label{fig:fictitious}
\end{center}
\end{figure}

In view of~\eqref{eq:indoutd} in the new graph $\ida{i}{G}=\ida{i}{G'} = \outda{i}{G'}$. Also $\ida{x}{G'} = \outda{x}{G'}=\outda{x}{G}$ for all $x\in V(G)\setminus\{i\}$, and hence if $\pi$ denotes the stationary distribution of a simple random walk on the directed graph $G'$, we obtain
\[
\pi(i) = \frac{\ida{i}{G}}{\sum_{x\neq i}\outda{x}{G}+\ida{i}{G}}.
\]
Since the directed graph $G'$ is strongly connected, the simple random walk on $G'$ is irreducible, and hence the return time to $i$ in the graph $G'$ is $1/\pi(i)$, i.e.,
\begin{align*}
\frac{\sum_{x\neq i}\outda{x}{G} + \ida{i}{G}}{\ida{i}{G}} = \frac{\outda{i}{G}}{\outda{i}{G'}} \estart{\tau_i^+}{i}  + \sum_{j\in A} \frac{(\ida{j}{G'}-\ida{j}{G})s}{\outda{i}{G'}}(1+\estart{\tau_i}{j}),
\end{align*}
where $\tau_i$ is the hitting time of vertex $i$ in $G'$.
However, since $\outda{i}{G'}=\ida{i}{G}$ and $\estart{\tau_i}{j}\geq 1$ for all $j\neq i$ by rearranging we get
\[
\estart{\tau_i^+}{i} \leq \frac{\sum_x \outda{x}{G} + \outda{i}{G} - \ida{i}{G}}{\outda{i}{G}}
\]
and this concludes the proof of the lemma.
\end{proof}

In the next results, we will usually need to construct new graphs that come from a directed graph $G$ with a distinguished vertex $u$. In order to avoid repetitions of the same construction in many of the statements and proofs we now give the definition of the new graph.

\begin{definition}\label{def:graphgi}\rm{
Let $G$ be a directed graph and $u\in V(G)$ a distinguished vertex. We write~$I_u$ for the set of vertices having a directed edge to~$u$. For each $i\in I_u$, we construct the graph $G_i$ as follows: first we remove $u$ from the graph $G$ together with all the edges incident to it and then we connect every $j\in I_u\setminus\{i\}$ to
$i$ using multiple edges if there are multiple edges in the original graph between~$j$ and~$u$ so that $\outda{j}{G_i}=\outda{j}{G}$. We define $A_i =  \{x: i\rightsquigarrow x \ \text{ in } \ G_i\}$ and the graph $(A_i,E_i)$ to be the subset of~$G_i$ induced by $A_i$. We write $\ida{x}{A_i}$ and $\outda{x}{A_i}$
for the indegree and outdegree of $x \in A_i$ in the graph $(A_i,E_i)$.
}
\end{definition}

\begin{figure}[h!]
\begin{center}
\includegraphics[scale=1]{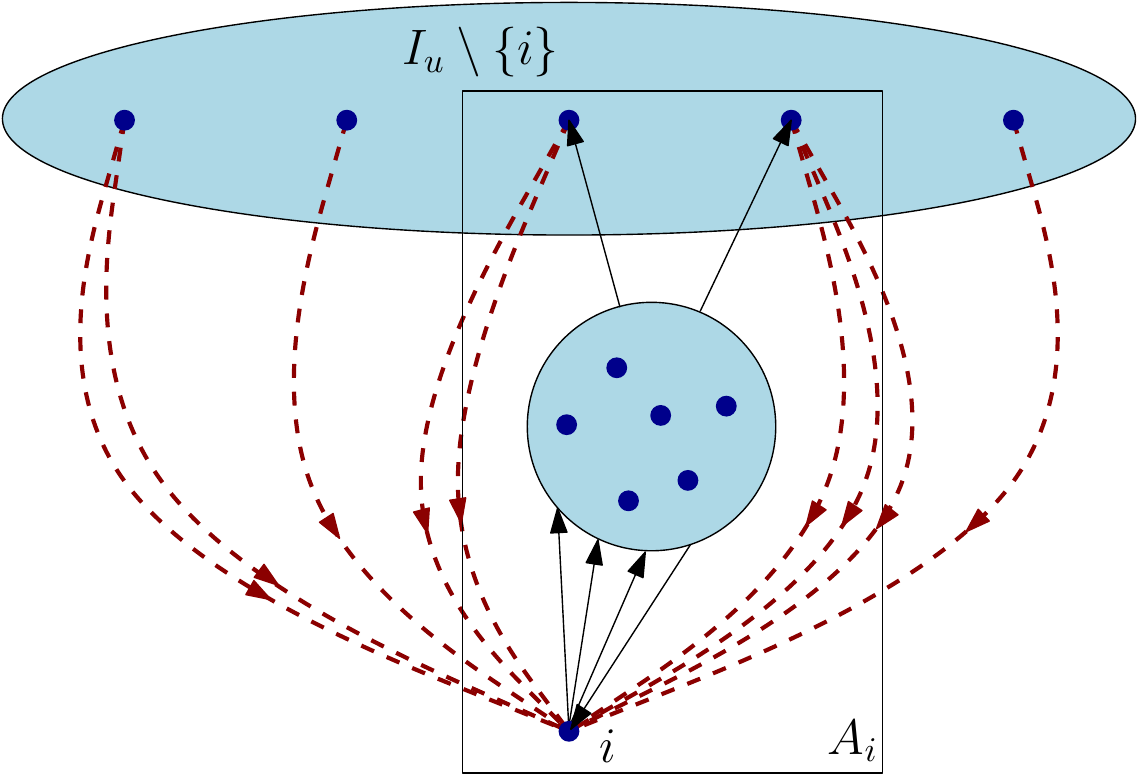}
\caption{The graph $(A_i,E_i)$, where the red dashed lines represent the new edges and the black lines are edges in the original graph.}\label{fig:defnai}
\end{center}
\end{figure}

\begin{lemma}\label{lem:setA}
Let $G$ be a strongly connected graph such that~$\ida{x}{G}=\outda{x}{G}$ for all~$x$. Let $u$ be a distinguished vertex of $G$.
Fix $i$ and suppose that $A_i\neq \emptyset$.
The graph~$(A_i,E_i)$ is strongly connected and contains $i$. Furthermore if $r_i$ is the number of directed edges from $i$ to $u$ in the graph $G$, then
\begin{align*}
\outda{x}{A_i} &=\outda{x}{G} \ \text{ and } \  \ida{x}{A_i} \leq \ida{x}{G}, \ \text{ for all } x\in A_i\setminus\{i\} \\
\outda{i}{A_i}&= \outda{i}{G}-r_i \ \text{ and } \ \ida{i}{A_i}\geq \outda{i}{A_i}.
\end{align*}
\end{lemma}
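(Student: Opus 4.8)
The plan is to proceed in three stages: degree bookkeeping in the auxiliary graph $G_i$; transferring those relations to the induced subgraph $(A_i,E_i)$; and one counting argument that delivers both strong connectivity and the inequality at $i$. For the first stage I would simply track what happens to the degrees. Passing from $G$ to $G_i$ deletes $u$ and all its incident edges, and for each $j\in I_u\setminus\{i\}$ reroutes the $r_j$ edges $j\to u$ into edges $j\to i$. So for $x\notin\{i,u\}$ no out-edge is touched and only in-edges coming from $u$ are lost: $\outda{x}{G_i}=\outda{x}{G}$ and $\ida{x}{G_i}\le\ida{x}{G}$. At $i$ the $r_i$ edges $i\to u$ disappear and no out-edge of $i$ is created: $\outda{i}{G_i}=\outda{i}{G}-r_i$. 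Finally, using that in $G$ the in- and out-degrees agree everywhere, that $\sum_{j\in I_u}r_j$ equals $\ida{u}{G}$ (self-loops at $u$, if any, play no role), and that the number of edges from $u$ to $i$ is at most $\outda{u}{G}=\ida{u}{G}$, a short computation gives $\ida{i}{G_i}\ge\outda{i}{G_i}$. Thus $G_i$ is ``almost balanced'': $i$ has in-degree at least out-degree and every other vertex has in-degree at most out-degree.

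For the second stage, the key observation is that $A_i$ is forward-closed in $G_i$: if $x\in A_i$ and $x\to y$ is an edge of $G_i$, then $i\rightsquigarrow x\to y$ gives $y\in A_i$, so the edge survives in $(A_i,E_i)$. Hence $\outda{x}{A_i}=\outda{x}{G_i}$ for every $x\in A_i$, while $\ida{x}{A_i}\le\ida{x}{G_i}$ since some in-edges may originate outside $A_i$. Combined with the first stage this already yields $\outda{x}{A_i}=\outda{x}{G}$ and $\ida{x}{A_i}\le\ida{x}{G}$ for $x\in A_i\setminus\{i\}$, and $\outda{i}{A_i}=\outda{i}{G}-r_i$; also $i\in A_i$ since trivially $i\rightsquigarrow i$.

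For the third stage it remains to prove that $(A_i,E_i)$ is strongly connected and that $\ida{i}{A_i}\ge\outda{i}{A_i}$. Since $i$ reaches all of $A_i$ by definition, strong connectivity amounts to showing every $x\in A_i$ reaches $i$ inside $(A_i,E_i)$. Let $B$ be the set of vertices of $A_i$ that reach $i$, so $i\in B$, and suppose for contradiction $B\ne A_i$. No edge of $(A_i,E_i)$ runs from $A_i\setminus B$ to $B$ (its tail would then reach $i$), so all out-edges of vertices of $A_i\setminus B$ stay inside $A_i\setminus B$. Counting in the subgraph induced on $A_i\setminus B$: its out-degree sum equals $\sum_{x\in A_i\setminus B}\outda{x}{A_i}$, its in-degree sum is at most this, and every $x\in A_i\setminus B$ has $\ida{x}{A_i}\le\outda{x}{A_i}$ since $x\ne i$; hence equality is forced throughout, so $A_i\setminus B$ receives no edge from $B$ at all — contradicting that $A_i\setminus B$ is non-empty yet reachable from $i\in B$. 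Therefore $B=A_i$. The analogous count over all of $(A_i,E_i)$ — total out-degree equals total in-degree, while $\outda{x}{A_i}\ge\ida{x}{A_i}$ for every $x\ne i$ — forces the slack to be absorbed at $i$, i.e.\ $\ida{i}{A_i}\ge\outda{i}{A_i}$.

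The degree arithmetic of the first stage is where slips are easiest (self-loops at $u$, multiple edges, a possible edge $u\to i$), but the real content sits in the last step: the point is that ``$A_i\setminus B$ receives no edge from $B$'' is upgraded, via the near-balance of $G_i$, to ``$A_i\setminus B$ receives no edge at all'', and it is this stronger statement that is incompatible with reachability from $i$. Getting that counting step stated cleanly, rather than the routine degree arithmetic, is what I expect to require the most care.
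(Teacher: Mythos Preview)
Your proof is correct. The degree bookkeeping and the forward-closure argument match the paper essentially verbatim, and the handshake identity for $\ida{i}{A_i}\ge\outda{i}{A_i}$ is exactly what the paper does.

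The one genuine difference is the strong connectivity step. The paper does not use a counting argument: instead it goes back to the original graph $G$ and exhibits a path from any $x\in A_i$ to $i$ directly. Since $G$ is strongly connected, there is a $G$-path $x\rightsquigarrow i$; if it avoids $u$ it is already a $G_i$-path, and if it enters $u$ via an edge $(j,u)$ with $j\in I_u$, the rerouting in $G_i$ replaces that edge by $(j,i)$, so the path is cut short at $i$. Once $x\rightsquigarrow i$ in $G_i$, concatenating with $i\rightsquigarrow x$ shows every vertex on the path lies in $A_i$, so the path lives in $(A_i,E_i)$.

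Your route is different: you stay entirely inside $G_i$, set $B=\{x\in A_i:\ x\rightsquigarrow i\}$, and use the near-balance $\ida{x}{A_i}\le\outda{x}{A_i}$ for $x\ne i$ to force $A_i\setminus B$ to have no incoming edges at all, contradicting reachability from $i$. This is a clean and self-contained argument that never appeals to the strong connectivity of $G$; it buys you a more structural picture (the ``no trap set'' phenomenon is really a consequence of the degree inequalities alone). The paper's argument is a line shorter and more constructive, but yours isolates more clearly which hypothesis is doing the work.
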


\begin{proof}[{\bf Proof}]

First we establish that if $x\in A_i$, then there is a directed path from $x$ to $i$ using only vertices of $A_i$. Indeed, in the original graph $G$, there is a path from $x$ to $i$, since $G$ was assumed to be strongly connected. If this path does not use the vertex $u$, then we have nothing to show. If it does, then if it uses the edge $(i,u)$, then we are done again. If not, then it uses an edge of the form $(i_\ell,u)$, in which case since $i_\ell \in I_{u}$ is connected to $i$ by at least one edge in $G_i$, it follows that $x\to i$.
Clearly by the definition of the set $A_i$ all the vertices in the path from $x$ to $i$ are in $A_i$. Furthermore, $i \in A_i$, since its neighbours are in $A_i$ by definition. Since all vertices in $A_i$ are connected to $i$ in both directions, it follows that the graph defined by $A_i$ is strongly connected.

Again by definition it follows that all the neighbours of $x\in A_i\setminus\{i\}$ are in $A_i$. Hence if~$x\in A_i\setminus\{i\}$ we have $\outda{x}{A_i} =\outda{x}{G}$  and $\outda{i}{A_i}=\outda{i}{G}-r_i$. th we deduce that $\ida{x}{A_i}\leq \ida{x}{G}$ for all~$x\in A_i\setminus\{i\}$. Using these inequalities together with the fact that
\[
\sum_{x\in A_i\setminus\{i\}} (\outda{x}{A_i} - \ida{x}{A_i}) =\ida{i}{A_i} - \outda{i}{A_i}
\]
we deduce that
$\ida{i}{A_i} \geq \outda{i}{A_i}$ and this finishes the proof of the lemma.
\end{proof}

\begin{lemma}\label{lem:excursions}
Let $G$ be a strongly connected graph on $n$ vertices such that for all vertices~$x$ it satisfies $\ida{x}{G} = \outda{x}{G} =d$. We fix a vertex $u$ and write $I_u$ for the set of vertices that have an edge leading to $u$. If $\mu$ is a probability measure supported on $I_u$ and $\tau_u$ is the first hitting time of $u$ by a simple random walk on $G$, then
\[
\estart{\tau_{u}}{\mu} \leq nd-d.
\]
\end{lemma}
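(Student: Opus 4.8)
The plan is to reduce the hitting time $\estart{\tau_u}{\mu}$ to a quantity controlled by Lemma~\ref{lem:setA} together with Lemma~\ref{lem:norrisud}, exploiting the fact that $\mu$ is supported on $I_u$, i.e.\ the walk starts one step away from $u$. First I would decompose a trajectory started from $\mu$ according to the first vertex $j\in I_u$ it visits (namely the random starting point), and then run the walk until it hits $u$. The key observation is that as long as the walk has not reached $u$, it is confined to the set of vertices reachable from its current position without passing through $u$; in particular, if the walk starts at $i\in I_u$, then before hitting $u$ it stays within $A_i$ (in the notation of Definition~\ref{def:graphgi}), and on $A_i$ the law of the walk agrees with the simple random walk on $(A_i,E_i)$ up until the time it would use an edge into $u$. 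So $\estart{\tau_u}{i}$ is bounded by the expected time for the walk on $(A_i,E_i)$ to traverse an edge that "used to go to $u$", which I would relate to the return time to $i$ in $(A_i,E_i)$.

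The main step is therefore: for each $i\in I_u$ with $A_i\neq\emptyset$, bound $\estart{\tau_u}{i}$ by (something like) $\estart{\tau_i^+}{i}$ computed in the graph $(A_i,E_i)$, and then apply Lemma~\ref{lem:norrisud} to that graph. By Lemma~\ref{lem:setA}, the graph $(A_i,E_i)$ is strongly connected, has $\ida{x}{A_i}\le\outda{x}{A_i}$ for $x\neq i$ and $\ida{i}{A_i}\ge\outda{i}{A_i}$, so Lemma~\ref{lem:norrisud} applies with distinguished vertex $i$ and gives
\[
\estart{\tau_i^+}{i}_{(A_i,E_i)} \leq \frac{\sum_{x\in A_i}\outda{x}{A_i} + \outda{i}{A_i}-\ida{i}{A_i}}{\outda{i}{A_i}}.
\]
Since $\outda{x}{A_i}=\outda{x}{G}=d$ for $x\neq i$ and $\outda{i}{A_i}=d-r_i$, the numerator is at most $d\cdot(|A_i|-1)+(d-r_i) = d|A_i|-r_i \le dn-d$ (using $|A_i|\le n-1$, since $u\notin A_i$, and $r_i\ge1$), while the denominator $\outda{i}{A_i}=d-r_i\ge 1$ — here I would need to be careful: if $r_i=d$ then $A_i$ is empty (every out-edge of $i$ went to $u$), so for $i$ with $A_i\neq\emptyset$ we have $\outda{i}{A_i}\ge1$. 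The subtlety is whether each hitting step into $u$ in the original walk really corresponds to a full return excursion to $i$ in $(A_i,E_i)$ or whether one gets a strictly shorter quantity; I expect the honest bound is that $\estart{\tau_u}{i}$ is at most the expected time for one of the $r_i$ "marked" out-edges of $i$ to be used, which is at most $\estart{\tau_i^+}{i}$ in $(A_i,E_i)$ scaled appropriately, and in any case is bounded by $nd-d$.

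Finally I would average over $\mu$: since $\mu$ is supported on $I_u$ and for every $i$ in the support the bound $\estart{\tau_u}{i}\le nd-d$ holds (the case $A_i=\emptyset$ being trivial, as then all out-edges of $i$ lead to $u$ so $\tau_u=1\le nd-d$ from $i$ for $n\ge2$, $d\ge2$), we conclude $\estart{\tau_u}{\mu}=\sum_i \mu(i)\,\estart{\tau_u}{i}\le nd-d$. The part I expect to be the genuine obstacle is the first reduction: making precise the claim that, started from $i\in I_u$, the walk's path up to $\tau_u$ can be coupled with an excursion of the simple random walk on $(A_i,E_i)$ from $i$ back to $i$, so that the fictitious edges added in forming $G_i$ (and hence $(A_i,E_i)$) correctly account for the transitions that in $G$ would have gone through $u$ and bounced back to some $j\in I_u$. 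Getting the bookkeeping of edge multiplicities right, and verifying that $\estart{\tau_i}{j}$-type terms only help, is where the care is needed.
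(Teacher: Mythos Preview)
Your reduction has a genuine gap: the inequality you are aiming for, namely $\estart{\tau_u}{i}\le \estart{\tau_i^+}{i}$ computed in $(A_i,E_i)$, is false. In the coupling you sketch, whenever the $G$-walk would step $j\to u$ from some $j\in I_u\setminus\{i\}$, the $(A_i,E_i)$-walk steps $j\to i$; but when the $G$-walk simply returns to $i$ along an original edge, the $(A_i,E_i)$-walk also returns to $i$ and its clock $\tau_i^+$ \emph{stops}, while the $G$-walk has not yet reached $u$ and must continue. Thus one return in $(A_i,E_i)$ corresponds only to one \emph{excursion} from $i$ back to $I_u$, not to the whole trajectory up to $\tau_u$. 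Concretely, on $L(2,2)$ with $u=2$ and $i=1$ one has $\estart{\tau_2}{1}=4$ but $\estart{\tau_1^+}{1}$ in $(A_1,E_1)$ equals $3$. Your suggested fix of ``scaling appropriately'' by a geometric number of visits to $i$ does not work either, because the walk may hit $u$ from some $j\neq i$, and the expected excursion length from such a $j$ cannot be read off from $(A_i,E_i)$ via Lemma~\ref{lem:norrisud} (the degree hypotheses are only satisfied at $i$).

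The paper's proof resolves this by \emph{not} fixing $i$. It decomposes the trajectory into excursions between successive visits to the whole set $I_u$: at each visit to some $j\in I_u$ the walk jumps to $u$ with probability $r_j/d$, and otherwise makes an excursion back to $I_u$ whose length $\xi^{(j)}$ is bounded, via Lemmas~\ref{lem:setA} and~\ref{lem:norrisud} applied to $(A_j,E_j)$, by $(dn-d-1)/(d-r_j)$. The crucial point is that in the expectation of the total time, each excursion contributes $\frac{d-r_j}{d}\cdot\frac{dn-d-1}{d-r_j}=\frac{dn-d-1}{d}$, independently of which $j$ is visited; summing the geometric series $\sum_{k\ge1}((d-1)/d)^{k-1}\cdot(dn-d-1)/d$ gives $dn-d-1$, and the final step into $u$ adds $1$. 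What your plan is missing is precisely this excursion decomposition with the vertex-by-vertex tracking of the current element of $I_u$; once you have it, the averaging over $\mu$ is unnecessary, as the bound $nd-d$ holds uniformly over starting points in $I_u$.
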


\begin{proof}[{\bf Proof}]

Let $X$ be a simple random walk on $G$ with $X_0\sim \mu$ and $I_u=\{i_1,\ldots,i_k\}$ with $k\leq d$, since there could be multiple edges.

For any $i\in I_u$ we write $r_{i}$ for the number of directed edges from $i$ to $u$.
Every time the random walk is at a vertex $i\in I_u$ it has probability $r_i/d$ of jumping directly to $u$. If it does not jump, then it starts walking in the remaining graph until the first time that it hits $I_u$ again. Define $(\xi_{k}^{(i)})_k$ to be the lengths of i.i.d.\ ``excursions'' starting from $i\in I_u$ until the first time that they come back to the set $I_u$ without hitting $u$ independently for different~$i$.

It is clear that adding directed edges from every $\ell\in I_u\setminus\{i\}$ to $i$ cannot affect $\xi_{1}^{(i)}$. Hence we can upper bound $\xi_{1}^{(i)}$ by the return time to $i$ in the graph $(A_i,E_i)$ constructed in Definition~\ref{def:graphgi}. In this graph we have $\outda{i}{A_i} = d-r_i$.

Lemma~\ref{lem:setA} gives that $A_i$ satisfies the assumptions of Lemma~\ref{lem:norrisud}. Therefore since $|A_i|< n$ and $r_i\geq 1$ for all $i \in I_u$ we deduce
\begin{align}\label{eq:upperboundi}
\estart{\xi^{(i)}_1}{i} \leq \frac{dn-d-1}{d-r_i}.
\end{align}
We now define independent collections of random variables
\begin{align*}
B_1^{(i_1)},B_2^{(i_1)},&\ldots \ \ \text{ i.i.d.\ $\B(r_{i_1}/d)$} \\
B_1^{(i_2)},B_2^{(i_2)},&\ldots \ \ \text{ i.i.d.\ $\B(r_{i_2}/d)$} \\
&\vdots \\
B_1^{(i_k)},B_2^{(i_k)},&\ldots \ \ \text{ i.i.d.\ $\B(r_{i_k}/d)$},
\end{align*}
where $\B(p)$ stands for the Bernoulli distribution of parameter $p$.

We can realize the random walk $X$ until the first time that it hits $u$ in the following way: at time~$0$ if $B_1^{(X_0)}=1$, then it jumps directly to~$u$. Otherwise it makes an ``excursion'' of length $\xi_1^{(X_0)}$ until the first time that it comes back to $I_u$.
We define $\ell(1)=X_0$ and $\zeta_1 = \xi_1^{(X_0)}$. Inductively we define
\[
\ell(k+1) =  X_{\zeta_k} \ \text{ and } \ \zeta_{k+1} = \sum_{s=1}^{k+1} \xi_s^{(\ell(s))}.
\]
In words, $\zeta_k$ is the time which has passed until the end of the $k$-th excursion and $\ell(k)$ is the position of the random walk at the end of the $(k-1)$-th excursion.
At the end of the $(k-1)$-th ``excursion''~$X$ hits~$u$ directly with probability $r_{\ell(k)}/d$. If it does not, thene we attach another excursion of length $\xi^{(\ell(k))}_k$ and we continue in the same way until the first time that $X$ hits $u$.
 We finally define
\[
T=\min\{ s: B_s^{(\ell(s))} =1\},
\]
i.e.\ $T$ is the number of used Bernoulli random variables until the first time that a Bernoulli is equal to~$1$. Hence we can now write
\begin{align}\label{eq:decod}
\estart{\tau_{u}}{\mu} = \E{\zeta_{T-1}} + 1.
\end{align}
By the definition of $\zeta$ we get
\begin{align}\label{eq:bigeqn}
\E{\zeta_{T-1}} &= \E{\sum_{k=1}^{T-1} \xi_k^{(\ell(k))}} =  \sum_{k=1}^{\infty}\E{\xi_k^{(\ell(k))} \1(T>k)} =\sum_{k=1}^{\infty} \E{\econd{\xi_k^{(\ell(k))} \1(T>k)}{\xi_k^{(\ell(k))}, (\ell(j))_{j\leq k}}} \\ \nonumber
&= \sum_{k=1}^{\infty} \E{\frac{(d-r_{\ell(1)})}{d}\ldots \frac{(d-r_{\ell(k)})}{d}  \econd{\xi_k^{(\ell(k))}}{(\ell(j))_{j\leq k}}}.
\end{align}
Using~\eqref{eq:upperboundi} we now immediately get that
\[
\econd{\xi_k^{(\ell(k))}}{(\ell(j))_{j\leq k}} \leq \frac{dn-d-1}{d-r_{\ell(k)}},
\]
since $\xi_k^{(\ell(k))}$ is the length of an ``excursion'' started from the vertex $\ell(k)$. Hence plugging that into~\eqref{eq:bigeqn} and using that $r_i\geq 1$ for all $i$ we get
\begin{align*}
\E{\zeta_{T-1}} \leq \sum_{k=1}^{\infty} (dn-d-1)\frac{(d-1)^{k-1}}{d^k} = dn-d-1.
\end{align*}
This together with~\eqref{eq:decod} gives
\[
\estart{\tau_{u}}{\mu} \leq dn-d
\]
and this concludes the proof of the lemma.
\end{proof}

We are now ready to give the proof of Theorem~\ref{thm:general}.

\begin{proof}[{\bf Proof of Theorem~\ref{thm:general}}]

Note that if $G$ is isomorphic to $L(d,n)$, then
\[
\max_{x,y}\estart{\tau_y}{x} = \frac{d}{2}n(n+1).
\]
We prove the strict inequality by induction on $n$. For $n=1$ it is trivially true. Suppose that for any strongly connected graph on $n+1$ vertices with $\id{x}=\outd{x}=d$ for all $x$ and which is not isomorphic to $L(d,n)$ we have
\[
\max_{x,y}\estart{\tau_y}{x} < \frac{d}{2}n(n+1).
\]
Let $G'$ be a strongly connected graph on $n+2$ vertices with~$\ida{x}{G'}=\outda{x}{G'}=d$ for all~$x$ which is not isomorphic to $L(d,n+1)$. We will show that for all $x$ and $y$
\begin{align}\label{eq:xyhit}
\estart{\tau_{y}}{x} < \frac{d}{2}(n+1)(n+2).
\end{align}
Let $I=\{i_1,\ldots,i_k\}$ be the vertices such that there is a directed edge from every $i_\ell$ to $y$. Note that $k\leq d$, since we are allowing multiple edges and self-loops.

Clearly we can write
\begin{align}\label{eq:firsteq}
\estart{\tau_{y}}{x} = \estart{\tau_{I}}{x} + \estart{\tau_{y}}{\mu},
\end{align}
where $\tau_I$ is the first hitting time of the set~$I$ by the simple random walk on $G'$ and~$\mu$ is a measure on~$I$ with
\[
\mu(i_\ell) = \prstart{X_{\tau_{I}} =i_{\ell}}{x}.
\]
By Lemma~\ref{lem:excursions} we immediately obtain
\begin{align}\label{eq:excproof}
\estart{\tau_{y}}{\mu} \leq nd+d.
\end{align}
If $x\in I$, then~\eqref{eq:excproof} finishes the proof.
So from now on we assume that $x\notin I$.
 Since in order to hit~$y$ we must first hit the set $I$, we are going to look at the graph not containing the vertex $y$ and the edges incident to it. Clearly adding edges coming out of points of $I$ is not going to change the first hitting time of the set $I$ starting from $x$.
We now explain how we add extra edges coming out of the set $I$ in order to apply the induction hypothesis to a graph of smaller size.

Let $J=\{j_1,\ldots,j_m\}$ be the vertices such that there is a directed edge from~$y$ to every~$j_\ell$. Note that as above $m\leq d$. Removing the vertex~$y$ and its incident edges removes the edges from $y$ to vertices in $J$ as well as edges from vertices in $I$ to $y$. Therefore in order to keep the in and out degrees of all the vertices in the new graph (obtained by removing~$y$) equal to $d$ we shall add edges going from~$I$ to~$J$. We describe how to achieve this whilst keeping the graph connected.

Let $J_1$ be the subset of $J$ containing only those vertices $j_r$ which in graph~$G'$ have an undirected path from~$j_r$ to~$i_1$ that does not visit vertex~$y$.

Suppose we have defined the sets $J_1,\ldots, J_\ell$. We next define $J_{\ell+1}$ to be the set of $j_r \in J \setminus \cup_{s\leq \ell}J_s$ which in $G'$ have an undirected path from $j_r$ to $i_{\ell+1}$ that does not visit vertex~$y$.

Note that some of the sets $J_\ell$ could be empty, nevertheless since the original graph is connected we have
\[
\bigcup_r J_r = J.
\]
We now consider only the non-empty subsets $J_r$, which we index by $s_1,\ldots,s_\ell$ so that if $j\in J_{s_1}$, then it is connected to $i_{s_1}$. We then connect $i_{s_1}$ to an element of $J_{s_2}$ (chosen arbitrarily) and $i_{s_2}$ to an element of $J_{s_3}$ and so on. Finally we connect $i_{s_m}$ to an element of $J_{s_1}$.

At the end of this procedure, we add edges between~$I$ and~$J$ so that in the resulting graph every vertex has indegree equal to outdegree equal to~$d$. This is possible, since the indegree of~$y$ is equal to its outdegree. We call the resulting graph~$G''$ as shown in Figure~\ref{fig:g''}.

\begin{figure}[h!]
\begin{center}
\includegraphics[scale=1]{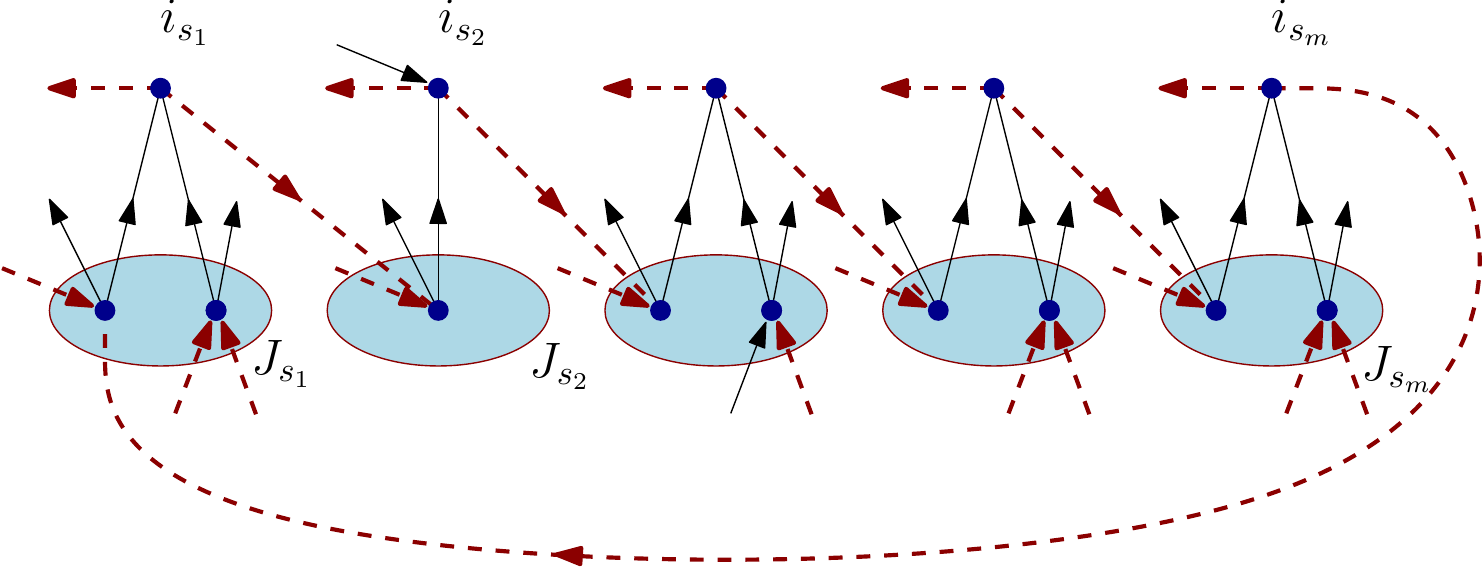}
\caption{The graph $G''$, where the red dashed lines are the new edges and the black lines are edges in the original graph $G'$.}\label{fig:g''}
\end{center}
\end{figure}

We now claim that this new graph~$G''$ is connected. Indeed, all the vertices in $J_r$ are connected to each other by the definition of the set $J_r$. Let $j_1\in J_{s_k}$ and $j_2 \in J_{s_{k+1}}$. Then since we connect $j_1$ to $i_{s_{k+1}}$ it follows that $j_1$ is connected to $j_2$. Furthermore, as each~$i\in I$ is connected to at least one~$j\in J$, the graph is connected. By Lemma~\ref{lem:eulerian} it follows that it is strongly connected (since we kept the in and out degrees at every vertex equal to~$d$) on~$n+1$ vertices.

If the graph~$G''$ is not isomorphic to~$L(d,n)$, then by the induction hypothesis we get
\[
\estart{\tau_I}{x} <  \frac{d}{2}n(n+1)
\]
and this together with~\eqref{eq:firsteq} and~\eqref{eq:excproof} finishes the proof of~\eqref{eq:xyhit} in this case. If~$G''$ is isomorphic to~$L(d,n)$ (in which case we identify these two graphs), then we shall consider two separate cases:
$|I|=1$ and~$|I| \geq 2$. We start with the case~$|I|=1$. If~$I=\{i_1\}$ and from~$i_1$ the only vertex we can reach in one step is $y$ in~$G'$, then
\begin{align}\label{eq:easyd}
\estart{\tau_y}{i_1} \leq d.
\end{align}
Since the graph~$G''$ is a strongly connected graph on $n+1$ vertices with in and out degree of every vertex equal to $d$, from the induction hypothesis it follows that
\[
\estart{\tau_I}{x} \leq \frac{d}{2}n(n+1).
\]
Hence this together with~\eqref{eq:firsteq} and~\eqref{eq:easyd} gives that in this case
\[
\estart{\tau_y}{x} <\frac{d}{2}(n+1)(n+2).
\]
If $i_1$ has another out-neighbour~$h\neq y$ in~$G'$, then $i_1$ cannot be an endpoint of the line, so the hitting time of~$i_1$ will be bounded by the maximum hitting time on $L(d,m)$ for $m\leq n-1$ and thus we get
\[
\estart{\tau_{i_1}}{x} < \frac{d}{2}n(n+1).
\]
This finishes the proof of~\eqref{eq:xyhit} in the case~$|I|=1$.
It remains to show that if~$|I|\geq 2$, then
\[
\estart{\tau_I}{x} < \frac{d}{2}n(n+1).
\]
Since the subgraph~$G''$ is isomorphic to~$L(d,n)$, then the vertex $i^*$ of $I$ closest to~$x$ satisfies
\[
\estart{\tau_{i^*}}{x}<\frac{d}{2}n(n+1).
\]
This together with~\eqref{eq:excproof} finishes the proof of the theorem.
\end{proof}

\section{Permutation walk}\label{sec:permutation}

\begin{lemma}\label{lem:connperm}
Let $\sigma$ be a permutation of $\{0,\ldots,n\}$. Then the Markov chain $X^\sigma$ is irreducible.
\end{lemma}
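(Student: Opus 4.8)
The plan is to realise $X^\sigma$ as the simple random walk on a directed graph $G^\sigma$ and then appeal to Lemma~\ref{lem:eulerian}. I would take $G^\sigma$ to have vertex set $\{0,\ldots,n\}$, with two directed edges $v\to\sigma(v+1)$ and $v\to\sigma(v-1)$ out of each interior vertex $v\notin\{0,n\}$, and the edges $0\to\sigma(0)$, $0\to\sigma(1)$ and $n\to\sigma(n)$, $n\to\sigma(n-1)$ at the boundary. By the definition of $X^\sigma$, this process is exactly the simple random walk on $G^\sigma$, so the lemma is equivalent to the assertion that $G^\sigma$ is strongly connected.

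The first step is to record that every vertex of $G^\sigma$ has outdegree and indegree equal to $2$. Outdegree is immediate from the definition, the two targets being distinct since $\sigma$ is injective. For the indegree the cleanest route is to note that $G^\sigma$ is obtained from $H:=L(2,n)$ (the directed line on $\{0,\ldots,n\}$ carrying one self-loop at $0$ and one at $n$) by replacing each edge $v\to u$ of $H$ with the edge $v\to\sigma(u)$; hence the indegree of $w$ in $G^\sigma$ equals the indegree of $\sigma^{-1}(w)$ in $H$, which is $2$. Thus $G^\sigma$ has equal in- and out-degrees at every vertex, and by Lemma~\ref{lem:eulerian} it remains only to prove that $G^\sigma$ is connected as an undirected graph.

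For the connectivity I would work at the level of connected components. Write $f(v)$ for the component of $v$ in the undirected graph underlying $G^\sigma$. Every interior vertex $v$ is adjacent to both $\sigma(v-1)$ and $\sigma(v+1)$, so $f(\sigma(v-1))=f(v)=f(\sigma(v+1))$ for $1\le v\le n-1$, while the boundary vertices give $f(\sigma(0))=f(0)=f(\sigma(1))$ and $f(\sigma(n-1))=f(n)=f(\sigma(n))$. Setting $g(k):=f(\sigma(k))$, the interior relations say $g(v-1)=g(v+1)$ for all $1\le v\le n-1$, so $g$ is constant on even indices and constant on odd indices; and $g(0)=f(0)=g(1)$ forces these two values to coincide, so $g$ is constant. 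Since $\sigma$ is a bijection, $f$ is then constant, i.e.\ $G^\sigma$ has a single component, and Lemma~\ref{lem:eulerian} concludes the proof.

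I do not anticipate a genuine obstacle here. The only places needing care are the boundary cases in the degree bookkeeping and the degenerate case $n=1$, where there are no interior vertices but the two boundary relations already pin $g$ down. The one idea worth isolating is that transporting the connectivity question through $\sigma$ onto the index set $\{0,\ldots,n\}$ turns it into a one-line parity argument, which is far cleaner than trying to trace explicit trajectories of $X^\sigma$, where the permutation hides the underlying nearest-neighbour structure.
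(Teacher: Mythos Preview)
Your proposal is correct and follows essentially the same approach as the paper: represent $X^\sigma$ as the simple random walk on a balanced directed graph, verify that indegrees equal outdegrees, establish undirected connectivity by a parity argument, and conclude via Lemma~\ref{lem:eulerian}. The only cosmetic differences are that you check the indegree via the relabelling from $L(2,n)$ and run the parity argument on the $\sigma$-transported indices $g(k)=f(\sigma(k))$, whereas the paper observes directly that $k-1$ and $k+1$ share the out-neighbour $\sigma(k)$ (linking vertices of the same parity) and that $0$ and $1$ share the out-neighbour $\sigma(0)$; these are dual phrasings of the same observation.
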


\begin{proof}[{\bf Proof}]
We first observe that the Markov chain can be represented as a random walk on a directed graph such that the outdegree of every vertex is equal to its indegree (note that we also count self-loops). Hence if we establish that ignoring orientations, the underlying graph is connected, then we can apply Lemma~\ref{lem:eulerian} and finish the proof.

It is easy to see that the undirected graph is connected. Indeed, from the description of the process, all the odd points are connected to each other and all the even points are connected to each other, since $k-1$ and $k+1$ both lead to $\sigma(k)$. Since $0$ and $1$ both lead to $\sigma(0)$, it follows that the two sets (odd and even points) are connected, and hence this concludes the proof.
\end{proof}

\begin{proof}[{\bf Proof of Theorem~\ref{thm:perm-0n}}]

As we already noted in the proof of Lemma~\ref{lem:connperm} above, the Markov chain~$X^\sigma$ can be viewed as a random walk on a directed graph such that the in and out degree of every vertex is equal to $2$. Furthermore, from Lemma~\ref{lem:connperm} we know that this graph is strongly connected.

Hence applying Theorem~\ref{thm:general} shows that
\[
\estart{\tau_n}{0} \leq n^2+n.
\]
From Theorem~\ref{thm:general} we get that equality is achieved only if the resulting graph is isomorphic to~$L(2,n)$ and $\sigma(0)=0$ and $\sigma(n)=n$. It thus follows that~$\sigma$ has to be the identity permutation and this finishes the proof.
\end{proof}

\begin{remark}\rm{
We note that the statement of Theorem~\ref{thm:perm-0n} remains true if we change the Markov chain as follows: whenever at $k$ the next step is either $\sigma(k)+1$ or $\sigma(k)-1$ equally likely. Indeed, it is easy to see that  this Markov chain is again a simple random walk on a $2$ directed graph which is strongly connected, and hence Theorem~\ref{thm:general} applies.
}
\end{remark}

\section{Open problem}\label{sec:motivation}

The following problem was communicated to us by Yuval Peres, but we could not trace its origins. We state it here:

\textbf{Open problem:} Let $\sigma$ be a permutation of $\{-n,\ldots,n\}$. Let $(\xi_i)_i$ be i.i.d.\ taking values in $\{-1,1\}$ equally likely and set $X^\sigma_0=0$ and $X_{t+1}^\sigma = \sigma(X_t^\sigma + \xi_{t+1})$ if $X_t^\sigma \neq n, -n$, otherwise $X_{t+1}^\sigma$ takes values in $\{\sigma(n), \sigma(n-1)\}$ or $\{\sigma(-n), \sigma(-n+1)\}$ respectively equally likely. Show that the identity permutation maximizes $\estart{\tau_{\{-n,n\}}}{0}$, where $\tau_{\{-n,n\}}$ is the first hitting time of the set $\{-n,n\}$ by~$X^\sigma$.

\begin{remark}\rm{
In contrast to Theorem~\ref{thm:perm-0n} where equality is achieved only when~$\sigma$ is the identity permutation, we note that for this problem this is no longer the case. In other words, the identity is not the unique permutation that maximizes $\estart{\tau_{\{-n,n\}}}{0}$. Indeed, if $\sigma(x)=-x$ for all $x\in [-n,n]\cap \Z$, then $\estart{\tau_{\{-n,n\}}}{0} = n^2$. Also, it is easy to check that if~$\sigma$ is the permutation that transposes~$0$ and~$1$, then it also achieves the same upper bound. Nevertheless, if $\sigma$ only transposes~$k$ with~$k+1$, then
\[
\estart{\tau_{\{-n,n\}}}{0} = n\frac{2n-3-2k}{n-1} + n(n-2)
\]
for all~$0<k\neq n-1,n$.
}
\end{remark}

By arguing in a similar way as in the proof of Lemma~\ref{lem:connperm}, it is easy to see that the process $X^\sigma$ can be viewed as a random walk on a $2$ directed graph which is strongly connected. Hence by Theorem~\ref{thm:general} we immediately get  that for any permutation $\sigma$
\[
\estart{\tau_{\{-n,n\}}}{0} \leq 4n^2 + 6n + 2.
\]

\section*{Acknowledgements}
We are grateful to Yuval Peres for telling us about the open problem stated in Section~\ref{sec:motivation}. We thank Jason Miller and James Norris for helpful discussions. Part of this work was completed while the first author was a postdoc at the University of Angers, France.

\bibliographystyle{plain}
\bibliography{biblio}

\end{document}